 \numberwithin{equation}{section} 
\newtheorem{thm}{Theorem}[section]
\newtheorem{defn}[thm]{Definition}
\newtheorem{remark}[thm]{Remark}
\newtheorem{prop}[thm]{Proposition}
\newcommand{\be}{\begin{equation}}
\newcommand{\ee}{\end{equation}}
\newtheorem{exam}[thm]{Example}
\newcommand{\bea}{\begin{eqnarray}}
\newcommand{\eea}{\end{eqnarray}}
\title{Notes for Neighborly Partitions}
\author{Kathleen O'Hara and Dennis Stanton}
\address{317 N. Broad St. Apartment 820, Philadelphia, PA 19107}
\email{ohara.kathy1@gmail.com}
\address{School of Mathematics,
University of Minnesota,
Minneapolis, Minnesota 55455, USA}
\email{stanton@math.umn.edu}
\begin{document}

\begin{abstract} A proof of the first Rogers-Ramanujan identity is given using
admissible neighborly partitions. This completes a program initiated by Mohsen and Mourtada.
The admissible neighborly partitions involve an unusual mod 3 condition on the parts.
\end{abstract}
\date{June 2, 2022}
\maketitle

\section{Introduction}

Using commutative algebra, Mohsen and Mourtada \cite{MM2022} gave combinatorial interpretations of the numerator 
infinite products of the Rogers-Ramanujan identities \cite[p. 104]{And1976}
$$
\sum_{k=0}^\infty \frac{q^{k^2}}{(q;q)_k}=\frac{(q^2,q^3,q^5;q^5)_\infty}{(q;q)_\infty},
\quad\sum_{k=0}^\infty \frac{q^{k^2+k}}{(q;q)_k}=\frac{(q^1,q^4,q^5;q^5)_\infty}{(q;q)_\infty}.
$$ 

To do so they defined a set of integer partitions $\lambda$, called {\it{neighborly}}, 
a related set of graphs $H_\lambda,$  and a {\it{signature}} for each graph $G\in H_\lambda.$
\begin{thm} 
\label{MMthm}
\cite{MM2022} Assuming the first Rogers-Ramanujan identity, the numerator
infinite product is 
$$
\sum_{\lambda\in Neighborly}q^{|\lambda|} \sum_{G\in H_\lambda} signature(G)
=(q^2,q^3,q^5;q^5)_\infty=1+\sum_{k=1}^\infty (-1)^k q^{5k^2-k/2}(1+q^k).
$$
\end{thm}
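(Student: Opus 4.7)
My plan is to split the chain of equalities and handle them in the opposite order from how they are displayed. The second equality
$$(q^2,q^3,q^5;q^5)_\infty = 1+\sum_{k=1}^\infty (-1)^k q^{(5k^2-k)/2}(1+q^k)$$
is an instance of Jacobi's triple product identity: that identity produces the bilateral sum $\sum_{k\in\Z}(-1)^k q^{k(5k-1)/2}$ for the product on the left, and pairing index $k\geq 1$ with index $1-k\leq 0$ folds the bilateral sum into the stated one-sided form, with the $k=0$ term contributing the leading $1$. I would dispatch this as a short opening lemma.

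For the first equality I would follow the strategy of Mohsen and Mourtada \cite{MM2022}. The plan is to interpret $\sum_\lambda q^{|\lambda|}\sum_{G\in H_\lambda} signature(G)$ as the numerator of a Hilbert series whose denominator is $(q;q)_\infty$: each pair $(\lambda,G)$ records one signed contribution coming from a term of a resolution of an associated monomial ideal, with the signature encoding the homological sign. Having identified the signed sum with that Hilbert-series numerator, the first Rogers--Ramanujan identity in the equivalent form
$$(q^2,q^3,q^5;q^5)_\infty = (q;q)_\infty\sum_{k=0}^\infty\frac{q^{k^2}}{(q;q)_k}$$
recognizes the numerator as $(q^2,q^3,q^5;q^5)_\infty$, since the factor $(q;q)_\infty^{-1}$ is exactly the Hilbert series of the underlying polynomial ring.

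The main obstacle I expect is the \emph{local} identification: showing, for each fixed neighborly $\lambda$, that $\sum_{G\in H_\lambda} signature(G)$ is exactly the coefficient of $q^{|\lambda|}$ in the numerator, not merely that the aggregate generating functions agree. I would calibrate the signature convention on small partitions $\lambda$ to be sure of signs and definitions, and then either appeal to the Euler-characteristic identity for alternating Betti sums in a cellular resolution of the relevant monomial ideal, or construct a sign-reversing involution on graphs $G\in H_\lambda$ that cancels all non-minimal contributions and leaves only the Betti multidegrees. With that local cancellation in place, composing with the Rogers--Ramanujan step and the Jacobi triple product calculation completes the argument.
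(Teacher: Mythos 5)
The paper never proves Theorem~\ref{MMthm} itself: it is quoted from Mohsen--Mourtada \cite{MM2022} (indeed the objects $H_\lambda$ in the double sum are not even defined here), and the whole point of the paper is to replace the conditional statement by an unconditional one. Your proposal reconstructs the route of \cite{MM2022}: identify the signed double sum with the numerator of the Hilbert series of a quotient by a monomial ideal whose standard monomials are the difference-$2$ partitions, so that the first Rogers--Ramanujan identity converts that numerator into $(q^2,q^3,q^5;q^5)_\infty$, and then apply Jacobi's triple product. That is a legitimate proof of the statement as phrased (it explicitly permits assuming RR1), but it is genuinely different from what this paper does. The paper's route is: Theorem~\ref{mainsig} collapses the inner signed sum over spanning forests to $0$ or $\pm 1$, reducing the left side to the single signed sum over admissible neighborly partitions of Proposition~\ref{MMRR14}; that sum is then evaluated, refined by the number of parts, in Theorem~\ref{ourmainthm} via the recurrence of Proposition~\ref{ourgnrecur} matched against Ramanujan's functional equation for the product side. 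What each approach buys: yours is shorter if one is willing to assume RR1 and the homological machinery; the paper's is elementary, unconditional (so it answers the question posed in \cite{MM2022}), and yields the stronger two-variable generating function. Two caveats on your write-up: the ``local identification'' you defer --- that $\sum_{G\in H_\lambda} signature(G)$ computes the correct multigraded Euler characteristic, e.g.\ via the independence-complex topology of \cite{EH2006} --- is the entire technical content of the first equality, so as written your argument is a plan rather than a proof; and in the triple-product folding the exponent $k(5k-1)/2$ is symmetrized by the pairing $k\leftrightarrow -k$ (giving $(-1)^k q^{(5k^2-k)/2}(1+q^k)$), not by $k\leftrightarrow 1-k$.
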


They ask \cite[p. 3]{MM2022} for a proof of Theorem~\ref{MMthm} without assuming the Rogers-Ramanujan identities. 
The purpose of this note is twofold:
\begin{enumerate}
\item  to provide such as proof (see Theorem~\ref{ourmainthm}),
\item to simplify the double sum in Theorem~\ref{MMthm} to a single sum of signed admissible 
neighborly partitions (see Proposition~\ref{MMRR14}).
\end{enumerate}
Along the way we give a combinatorial interpretation for the classical generalization 
Theorem~\ref{ourmainthm} of Theorem~\ref{MMthm}.

We use the standard notation for $q$-series found in \cite{And1976} and \cite{Gas:Rah}, and write the parts of an 
integer partition in increasing order.

\section{Neighborly partitions}

\begin{defn} A {\bf{neighborly}} partition $\lambda=(\lambda_1,\lambda_2,\cdots, \lambda_s)$ 
has all multiplicities at most 2, and 
for any part $\lambda_i$, there is a part $\lambda_j$, $j\neq i,$ such that 
$|\lambda_i-\lambda_j|\le 1.$
\end{defn}

A neighborly partition $\lambda$ can be considered as an ordered pair of partitions: 
$\lambda=(\mu_1,\mu_2)$, a distinct partition $\mu_1$ and another distinct
partition $\mu_2$ whose parts are a subset of the parts of $\mu_1$.  
\begin{exam}
If the neighborly partition is  $\lambda=(1,2,3,3,6,6,8,8,9,9,14,14),$ then
$$
\lambda=((1,2,3,6,8,9,14),(3,6,8,9,14))=(\mu_1,\mu_2).
$$
\end{exam}
The partition $\mu_1$ consists of some runs, with singletons possible. In the example
$$
\mu_1=(1,2,3,6,8,9,14),
$$
the runs are $1\leftrightarrow 2\leftrightarrow 3, 6, 8\leftrightarrow9, {\text{ and }} 14.$
Note that if $x$ is a singleton in $\mu_1$, then $x$ must appear in $\mu_2.$

Mohsen and Mourtada defined a {\it{signature}} on a graph $G_\lambda$ 
defined by a neighborly partition $\lambda$.

\begin{defn} The graph $G_\lambda$ of a neighborly partition $\lambda$ has vertices 
which are the parts of $\lambda$, and edges from the consecutive parts in runs of $\mu_1$, 
called the {\bf{backbone}}, along with edges between equal parts, called {\bf{hanging edges.}} 
\end{defn}

\begin{exam} If $\lambda=((1,2,3,6,8,9,14),(3,6,8,9,14))$ the backbone of $G_\lambda$ is
$$1\leftrightarrow 2\leftrightarrow3\quad  6\quad  8\leftrightarrow9\quad  14$$ with hanging edges
$$ 
3\leftrightarrow3\quad 6\leftrightarrow6\quad 8\leftrightarrow8\quad 9\leftrightarrow9\quad 14\leftrightarrow14,
{\text{ or}}
$$
$$
G_\lambda=
\begin{matrix}
1\leftrightarrow&2\leftrightarrow&3& &6&\quad 8\leftrightarrow&9&& 14\\
&&\updownarrow&&\updownarrow&\updownarrow&\updownarrow&&\updownarrow\\
&&3&&6&8&9&&14
\end{matrix}
$$
\end{exam}

\begin{defn} 
The {\bf{signature}} of the graph $G_\lambda$ is the signed sum over all vertex spanning 
forests $H$ of $G_\lambda$
$$
signature(G_\lambda)=\sum_{H\in VS(G_\lambda)} (-1)^{\# edges\  in\  H}.
$$
\end{defn}

\begin{exam} If $\lambda=(1,2,3,4)$, $G_\lambda=1\leftrightarrow 2\leftrightarrow 3\leftrightarrow 4$, 
there are two vertex spanning forests: the entire graph, or the graph with the 
edge $2\leftrightarrow 3$ deleted. So
$signature(G_\lambda)=(-1)^3+(-1)^2=0.$
\end{exam}

First we compute the signature on a backbone in the shape of a chain, i.e. without hanging edges. 

\begin{defn} Let $\lambda_n=(1,2,3,...,n+1)$ so that $G_{\lambda_n}$ is a chain with $n$ edges. 
\end{defn}

\begin{prop}
\label{chainsign}
The signature of a chain with $n$ edges is 
$$
B_n=signature(G_{\lambda_n})=
\begin{cases}
-1 {\text{\ if $n\equiv 1 \mod 3$}},\\
1 {\text{\quad if $n\equiv 2 \mod 3$}},\\
0 {\text{\quad if $n\equiv 3 \mod 3$}}.\\
\end{cases}
$$
\end{prop}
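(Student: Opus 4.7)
The first step is to understand the set $VS(G_{\lambda_n})$. Every subset of the $n$ edges of the chain is already a forest, so the definition must implicitly exclude isolated vertices --- this is consistent with the worked example $\lambda=(1,2,3,4)$, where only two forests are counted out of $2^3=8$ edge subsets. With that convention, a vertex spanning forest of the chain is the same thing as a decomposition of the path $v_0 v_1\cdots v_n$ into consecutive sub-paths each having at least one edge; equivalently, a composition $s_1+\cdots+s_m=n+1$ with every $s_j\ge 2$, in which case the number of kept edges is $(n+1)-m$.

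The second step is to compute the signed sum using this ``sequence of sub-paths'' description. A single sub-path on $s\ge 2$ vertices contributes weight $(-1)^{s-1}x^s$, whose total is $-x^2/(1+x)$. Assembling independent sequences of such blocks gives
\[
\sum_{n\ge 0}B_n\,x^{n+1}=\sum_{m\ge 1}\!\left(\frac{-x^2}{1+x}\right)^{\!m}=\frac{-x^2}{1+x+x^2},
\]
which is equivalent, after clearing the denominator, to the three-term recurrence $B_n+B_{n-1}+B_{n-2}=0$ for $n\ge 2$ together with the initial values $B_0=0$ and $B_1=-1$.

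The final step is to read off the residues mod $3$. Using $1/(1+x+x^2)=(1-x)/(1-x^3)$, I would expand
\[
\frac{-x^2}{1+x+x^2}=\frac{-x^2(1-x)}{1-x^3}=(x^3-x^2)\sum_{k\ge 0}x^{3k}
\]
and extract the coefficient of $x^{n+1}$, obtaining exactly the three values $-1,1,0$ according to $n\bmod 3$ as stated.

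I expect the only real obstacle to be the first step, namely justifying the reading ``spanning subgraph with no isolated vertex'' for the term \emph{vertex spanning forest}. Under the literal graph-theoretic meaning of ``spanning forest'' every subset of edges would qualify, and the signature of any chain of length $\ge 1$ would collapse to $\sum_{k=0}^{n}\binom{n}{k}(-1)^k=0$; matching the worked example for $\lambda=(1,2,3,4)$ forces the no-isolated-vertex convention, after which the generating function and coefficient extraction above are routine.
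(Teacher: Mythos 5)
Your proof is correct and follows essentially the same route as the paper: both decompose a vertex spanning forest of the chain into its sequence of connected sub-paths, recognize these as compositions, obtain a rational generating function, and read off the period-3 pattern from $1/(1+t+t^2)=(1-t)/(1-t^3)$. The only cosmetic difference is that you substitute the sign $-1$ per edge from the outset (and index by vertices), whereas the paper keeps a two-variable generating function $xt/(1-xt-xt^2)$ and specializes $x=-1$ at the end; your explicit observation that ``vertex spanning forest'' must exclude isolated vertices is a correct reading of the paper's convention.
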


\begin{exam} If $n=5$, let 
$$
e_1=1\leftrightarrow 2,\  e_2=2\leftrightarrow 3,\  e_3=3\leftrightarrow 4,\  e_4=4\leftrightarrow 5,\  e_5=5\leftrightarrow 6.
$$
The vertex spanning subgraphs are 
$$
\{e_1,e_2,e_3,e_4,e_5\},\{e_1,e_3,e_4,e_5\},\{e_1,e_2,e_4,e_5\},\{e_1,e_2,e_3,e_5\},\{e_1,e_3,e_5\},
$$
so $B_5=1.$
\end{exam}

\begin{proof} Let $B_n(x)$ be the generating function for vertex spanning forests $H$ of $G_{\lambda_n}$
according to the number of edges, 
$$
B_n(x)=\sum_{H\in VS(G_{\lambda_n})} x^{\# {{\text{ edges in H}}}}, 
$$ 
By counting the number of edges in connected components from left to right, 
the coefficient of $x^{n-k}$ in $B_n(x)$ is the number of compositions of $n-k$ into $k+1$ parts. 
So 
$$
B_n(x)= \sum_{k=0}^{\lfloor(n-1)/2\rfloor} \binom{n-k-1}{k} x^{n-k}.
$$

The generating function of $B_n(x)$ is 
\begin{equation}
\label{Fib}
\sum_{n=1}^\infty B_n(x) t^n =xt/(1-xt-xt^2),
\end{equation}
so
$$
\sum_{n=1}^\infty B_n(-1) t^n =-t/(1+t+t^2)=-t\frac{1-t}{1-t^3},
$$
which proves the mod 3 behavior of $B_n$.
\end{proof}

\begin{remark} One can also see the generating function as compositions of 
1's and 2's (Fibonacci numbers), by counting the number of new vertices 
each successive edge gives. So one would see \eqref{Fib} almost immediately.
\end{remark}

We now consider the case when $G_\lambda$ of a neighborly partition $\lambda$ has one connected component.

\begin{prop} 
\label{newprop}
Suppose $\lambda=(\mu_1,\mu_2)$ where $\mu_1=(1,2,...,n),$ and $\mu_2=(a_1,a_2,...,a_s),$
$s\ge 1.$ Then 
$$
signature(G_\lambda)=(-1)^s B_{a_1} B_{n-a_s+1}\prod_{k=2}^s B_{a_k-a_{k-1}+2}. 
$$
Thus the signature of any connected component of any $G_{\lambda}$ is +1, -1 or 0. 
\end{prop}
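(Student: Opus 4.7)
My plan is to peel off the forced hanging edges and factor the remaining backbone signed count into segment contributions that match chain signatures $B_m$. Since each hanging vertex in $G_\lambda$ has degree one, every vertex spanning forest must contain all $s$ hanging edges, for otherwise the hanging vertex would be isolated. Pulling these out contributes the factor $(-1)^s$ and removes the coverage obligation on the backbone vertices $a_1<\cdots<a_s$. The remaining problem is to compute $\sum_{H'} (-1)^{|H'|}$ where $H'$ ranges over subsets of the backbone edges $e_1,\ldots,e_{n-1}$ for which every backbone vertex outside $\{a_1,\ldots,a_s\}$ has positive degree in $H'$.

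I would then partition the backbone into $s+1$ consecutive segments with cuts at the $a_i$: the left segment $\{e_1,\ldots,e_{a_1-1}\}$, the middle segments $\{e_{a_{k-1}},\ldots,e_{a_k-1}\}$ for $2\le k\le s$, and the right segment $\{e_{a_s},\ldots,e_{n-1}\}$. Because the $a_i$ sit at segment boundaries and are constraint-free, the coverage conditions in distinct segments are independent and the signed count factors as $L\cdot M_2\cdots M_s\cdot R$. Each segment is a chain whose interior vertices must have positive degree; its endpoints are either required (the extremum $1$ or $n$, when it does not belong to $\{a_i\}$) or free (every $a_i$). To match a segment with an all-required chain signature $B_m$, I attach a virtual pendant vertex with a virtual edge at each free endpoint; since the virtual vertex has degree one, the virtual edge is forced into any valid subset, yielding a bijection between valid subsets of the original segment and of the enlargement that adds one edge per virtual pendant.

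Each virtual edge contributes a sign $-1$ under this bijection. The enlarged left, middle-$k$, and right chains then have $a_1$, $a_k-a_{k-1}+2$, and $n-a_s+1$ edges respectively with all vertices required, so by definition $L=-B_{a_1}$, $M_k=B_{a_k-a_{k-1}+2}$ (the two virtual minus signs cancel), and $R=-B_{n-a_s+1}$. Combining with the $(-1)^s$ from the hanging edges, the two outer minus signs cancel and the stated identity emerges. The final assertion that any connected component's signature is $\pm 1$ or $0$ is then immediate from Proposition~\ref{chainsign}. The point demanding the most care is the sign bookkeeping together with the boundary cases $a_1=1$ or $a_s=n$, where a free endpoint coincides with a backbone extremum and the corresponding segment degenerates to a single vertex; the virtual-pendant device handles these uniformly by attaching exactly one pendant at that lone vertex, producing a chain of one edge that matches $B_1=-1$ correctly.
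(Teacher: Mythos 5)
Your proof is correct and follows essentially the same route as the paper: both force the $s$ hanging edges into every vertex spanning forest, split the backbone at the $a_i$ into sub-chains of lengths $a_1$, $a_k-a_{k-1}+2$, $n-a_s+1$, and account for the sign with a factor $(-1)^s$. Your ``virtual pendant'' at each free endpoint is precisely the actual hanging edge $a_i\leftrightarrow a_i$, which the paper simply includes in both adjacent sub-chains and then compensates for double-counting.
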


\begin{proof} The hanging edges must be in any vertex spanning forest $H$. Thus we need spanning forests for the 
chains 
$$
\begin{aligned}
&1\leftrightarrow 2\leftrightarrow \cdots \leftrightarrow a_1\leftrightarrow a_1, \\
&a_1\leftrightarrow a_1\leftrightarrow a_1+1\leftrightarrow \cdots\leftrightarrow a_2\leftrightarrow a_2, \cdots \\
& a_s\leftrightarrow a_s\leftrightarrow a_s+1 \cdots \leftrightarrow n,
\end{aligned}
$$   
which have respectively
$$
a_1, a_2-a_1+2,a_3-a_2+2,\cdots, n-a_s+1 {\text{\quad edges.}}
$$ 
The choices for spanning forests in these smaller chains may be done independently. 
Each hanging edge has been used twice, so the factor $(-1)^s$ compensates.
\end{proof}

\begin{exam} If $\lambda=((1,2,3,4,5,6,7),(3,6,7)),$ $signature(G_\lambda)=(-1)^3 B_3B_5 B_3B_1=0.$
\end{exam}

Finally we need to keep track of the signatures of the connected components of $G_{\lambda}$. 

\begin{defn} Let $\lambda$ be a neighborly partition. 
The {\bf{signature multiset}} $SIG(c)$ of a connected component 
$$c=((k,k+1,k+2,\cdots,n), (a_1,a_2,...,a_s)), \quad s\ge 1$$ of $G_\lambda$ is
the multiset 
$$
SIG(c)=\{a_1-k+1, a_2-a_1+2, a_3-a_2+2,\cdots, a_s-a_{s-1}+2, n-a_s+1\}.
$$ 
If $s=0$ then 
$$
SIG(c)=\{n-k\}.
$$
The signature multiset $SIG(G_\lambda)$ for a general neighborly partition $\lambda$ is the multiset union over all 
connected components of the individual  signature multisets.
\end{defn}

\begin{exam}  
\label{examnotneigh}
If $\lambda=((2,4,5,6,7,10,12,13,14),(2,4,6,10,14)),$
$$
G_\lambda=
\begin{matrix}
&\ \ \ 2\quad&\ \ \ 4\leftrightarrow&5\leftrightarrow&6\leftrightarrow &7&\quad 10\quad&12\leftrightarrow&13\leftrightarrow& 14\\
&\updownarrow&\updownarrow&&\hspace{-15pt}\updownarrow&&\updownarrow&&&\updownarrow\\
&2&4&&\hspace{-15pt}6&&10&&&14
\end{matrix}
$$ 
the connected components are
$$
2\leftrightarrow2, \quad 4\leftrightarrow4\leftrightarrow5\leftrightarrow6\leftrightarrow6\leftrightarrow7,
\quad 10\leftrightarrow10, \quad 12\leftrightarrow13\leftrightarrow14\leftrightarrow14.
$$
Because the signature is independent of labels, Proposition~\ref{newprop} can be 
applied to each connected component.
$$
SIG(G_\lambda)=\{1,1,1,4,2,1,1,3, 1\}=\{1,1\}\cup\{1,4,2\}\cup\{1,1\}\cup\{3,1\}.
$$
\end{exam}

\begin{remark} One may find the signature multiset by counting the edges in the chains 
that the parts of $\mu_2$ cut in the runs of $\mu_1$.
\end{remark}

The signature of any neighborly partition is always $0, 1,$ or $-1.$ 

\begin{thm} 
\label{mainsig}
Let $\lambda=(\mu_1,\mu_2)$ be a neighborly partition. Then  
$signature(G_\lambda)=0$ exactly when $SIG(G_\lambda)$ contains an element $x\equiv 0\mod 3.$
Otherwise, 
$$
signature(G_\lambda)=(-1)^{t+s} 
$$
where $t$ is the number of elements $x\in SIG(G_\lambda)$ such that 
$x\equiv 1\mod 3$, and $s$ is the number of parts of $\mu_2$.
\end{thm}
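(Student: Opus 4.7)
The plan is to reduce Theorem~\ref{mainsig} to the two propositions already proved, by first showing that $signature$ is multiplicative over connected components, and then collecting the contributions of individual $B_x$'s through Proposition~\ref{chainsign}.

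First I would note that $signature$ factors as a product over connected components of $G_\lambda$. Indeed, a vertex spanning forest of $G_\lambda$ is exactly a disjoint union of vertex spanning forests of each connected component $c$, and the total edge count is additive, so
$$
signature(G_\lambda)=\prod_{c} signature(c).
$$
Thus it suffices to analyze one component at a time.

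Next, for a component $c=((k,k+1,\dots,n),(a_1,\dots,a_s))$ with $s\ge 1$, I would invoke Proposition~\ref{newprop}, observing that it is translation-invariant in the labels of $\mu_1$, so it generalizes immediately from starting label $1$ to starting label $k$ and yields
$$
signature(c)=(-1)^{s}\,B_{a_1-k+1}\,B_{n-a_s+1}\prod_{j=2}^{s}B_{a_j-a_{j-1}+2}
=(-1)^{s}\prod_{x\in SIG(c)}B_x.
$$
For a component with $s=0$ (a pure backbone chain), the signature is $B_{n-k}=\prod_{x\in SIG(c)}B_x$, matching the above with $(-1)^0$. Multiplying over all components gives
$$
signature(G_\lambda)=(-1)^{s}\prod_{x\in SIG(G_\lambda)}B_x,
$$
where now $s=|\mu_2|$ is the total number of parts of $\mu_2$.

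Finally I would plug in Proposition~\ref{chainsign}: $B_x=0$ if $x\equiv 0\bmod 3$, $B_x=-1$ if $x\equiv 1\bmod 3$, and $B_x=1$ if $x\equiv 2\bmod 3$. Hence the product vanishes precisely when some $x\in SIG(G_\lambda)$ is $\equiv 0\bmod 3$, and otherwise equals $(-1)^t$ where $t$ counts the $x$'s with $x\equiv 1\bmod 3$, giving $signature(G_\lambda)=(-1)^{t+s}$. There is no real obstacle here; the only point requiring care is the translation step extending Proposition~\ref{newprop} from $\mu_1=(1,\dots,n)$ to $\mu_1=(k,\dots,n)$, which is immediate since the signature depends only on the isomorphism type of the graph, not on the numerical labels of its vertices.
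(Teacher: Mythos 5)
Your proposal is correct and follows exactly the route the paper intends: the paper leaves Theorem~\ref{mainsig} without an explicit proof, treating it as immediate from Proposition~\ref{newprop} applied to each connected component (noting, as in Example~\ref{examnotneigh}, that the signature is independent of labels) together with the mod~3 evaluation of $B_x$ in Proposition~\ref{chainsign}. Your explicit justification of the multiplicativity over components and of the translation step is a faithful filling-in of those same details.
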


\begin{remark} If $\lambda=(\mu_1,\mu_2)$ is neighborly and $signature(G_\lambda)\neq 0$, then 
$\mu_2$ does not contain consecutive parts, and thus $\mu_2$ is a difference 2 partition.
\end{remark}

\section{Admissible neighborly partitions}

Theorem~\ref{mainsig} shows that signature$(B_\lambda)$ is $\pm1$ or $0$ for any neighborly partition. 
Thus we can eliminate the inner sum in  Theorem~\ref{MMthm}, and replace the the set of neighborly partitions 
by the smaller set of partitions when signature$(B_\lambda)\neq 0.$ These are admissible 
neighborly partitions.

\begin{defn} A neighborly partition $\lambda=(\mu_1,\mu_2)$ is {\bf{admissible}} 
if $SIG(B_\lambda)$ contains no elements which are congruent to 0 modulo 3. 
\end{defn}

\begin{exam} The neighborly partition $\lambda$ in Example~\ref{examnotneigh} is not admissible.
A chain with $n$ edges is admissible if $3$ does not divide $n$.
\end{exam}

Since admissible neighborly partitions have signature $\pm 1,$ we may rename the signature 
by the sign.

\begin{defn} The {\bf{sign}} of an admissible neighborly partition 
$\lambda=(\mu_1,\mu_2)$ is
$$
sign(\lambda)=(-1)^{t+s} 
$$
where $t$ is the number of elements $x\in SIG(G_\lambda)$ such that 
$x\equiv 1\mod 3$, and $s$ is the number of parts of $\mu_2$.
\end{defn}

Then  Theorem~\ref{MMthm} is equivalent to the following propositions.
\begin{prop} 
\label{MMRR14}
The generating function for all signed admissible neighborly partitions $\lambda$ is
$$
\sum_{\lambda\in Adm Neighborly} sign(\lambda) q^{|\lambda|}= \prod_{k=0}^\infty (1-q^{5k+2})(1-q^{5k+3})(1-q^{5k+5})
= \sum_{k=-\infty}^\infty (-1)^k q^{k(5k+1)/2}
$$
\end{prop}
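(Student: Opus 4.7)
The plan is to evaluate the signed generating function
$$F(q) \,=\, \sum_{\lambda \in \text{AdmNeighborly}} \text{sign}(\lambda)\, q^{|\lambda|}$$
in closed form and identify it with $(q^2,q^3,q^5;q^5)_\infty$; the second equality in the statement is then just the Jacobi triple product identity and requires no new work.

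The first step exploits the multiplicativity of the sign. Every admissible neighborly partition $\lambda$ decomposes canonically as a disjoint union of the connected components of $G_\lambda$, one per maximal run of $\mu_1$. Because $t$ (the number of $SIG(G_\lambda)$ entries $\equiv 1 \pmod 3$) and $s = |\mu_2|$ are both additive over components, $\text{sign}(\lambda)$ factors as a product of component signs. Consequently $F(q)$ decouples as a weighted sum over all finite sequences of component templates placed on the positive integers, subject to the constraint that consecutive components be separated by at least one omitted integer (otherwise their $\mu_1$-runs would merge).

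The second step parameterizes a single component by its signature multiset. A component starting at position $k$ with $m$ hanging parts corresponds bijectively to an ordered tuple $(b_0,b_1,\ldots,b_m)$ of positive integers satisfying $b_i \not\equiv 0 \pmod 3$, $b_0,b_m \geq 1$, interior $b_i \geq 4$, and $\sum_i b_i = \ell + 2m - 1$, where $\ell$ is the run length. Both the weight and the sign of the component then admit explicit closed forms in $k$ and the $b_i$, reminiscent of the mod-$3$ Fibonacci-like generating function $-t(1-t)/(1-t^3)$ that governs chain signatures in Proposition~\ref{chainsign}. Summing over admissible tuples gives the contribution of a single component at position $k$, and assembling the sum over placements of finitely many components (respecting the gap rule) yields $F(q)$.

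The main obstacle is the final step: carrying out the sum over positions and multisets explicitly and matching it with $(q^2,q^3,q^5;q^5)_\infty$. Since the signs depend on mod-$3$ data while the target product lives on mod-$5$ arithmetic progressions, substantial cancellation must occur, and making this cancellation manifest is the delicate part. A plausible alternative is a sign-reversing involution on admissible neighborly partitions, analogous to Franklin's involution for Euler's pentagonal number theorem, acting on a canonically chosen local configuration (e.g.\ the smallest part or hanging edge satisfying a suitable condition) and whose fixed points are exactly those $\lambda$ for which $|\lambda| = k(5k+1)/2$ for some $k \in \Z$.
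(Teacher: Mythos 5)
There is a genuine gap: your proposal sets up the problem but never proves the identity. Steps one and two (multiplicativity of the sign over connected components, and the parameterization of a component by its signature multiset subject to $b_i\not\equiv 0\pmod 3$) are correct and consistent with the paper's Theorem~\ref{mainsig}, but they only restate what an admissible neighborly partition \emph{is}. The entire content of Proposition~\ref{MMRR14} lies in the ``final step'' that you explicitly flag as an obstacle --- evaluating the signed sum over placements of components and matching it with $(q^2,q^3,q^5;q^5)_\infty$. Saying that ``substantial cancellation must occur'' and that ``a plausible alternative is a sign-reversing involution \ldots whose fixed points are exactly those $\lambda$ for which $|\lambda|=k(5k+1)/2$'' describes what a proof would have to accomplish without supplying one; no involution is constructed, no fixed-point analysis is given, and no closed-form summation is carried out. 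As written, the argument would not establish the proposition.

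For comparison, the paper avoids a direct summation or involution entirely. It refines the generating function by the number of parts, $GF_n(q)$, and uses the pruned graphs $G'_\lambda$ (in which every third edge of each cut-out chain is deleted, so that $sign(\lambda)=(-1)^{\#edges(G'_\lambda)}$ and the first connected component is one of only six local types) to derive the four-term recurrence of Proposition~\ref{ourgnrecur} by peeling off the component containing the part $1$. That recurrence is then matched against the recurrence implied by Ramanujan's functional equation for the series $H(x)=1+\sum_{k\ge1}\frac{(-1)^kx^{2k}}{(q;q)_k}q^{(5k^2-k)/2}(xq;q)_{k-1}(1-xq^{2k})$, giving Theorem~\ref{ourmainthm}; setting $x=1$ collapses the right side to $1+\sum_{k\ge1}(-1)^kq^{(5k^2-k)/2}(1+q^k)=\sum_{k=-\infty}^{\infty}(-1)^kq^{k(5k+1)/2}$, and the Jacobi triple product then yields the infinite product. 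If you want to salvage your outline, the missing ingredient is precisely such a mechanism --- either an explicit recurrence/functional equation for your component-sum, or an actual construction of the Franklin-type involution you allude to.
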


\begin{exam} There are 4 admissible partitions of $n=8$, two positive and two negative, so the 
coefficient of $q^8$ in Proposition~\ref{MMRR14} is 0.
$$
\begin{aligned}
positive: &\lambda= ((2,3),(3)),\quad SIG(\lambda)=\{2,1\},\quad \lambda= ((1,3),(1,3)),\quad SIG(\lambda)=\{1,1,1,1\},\\
negative:&  \lambda=((4),(4)),\quad SIG(\lambda)=\{1,1\},\quad\lambda=((1,2,3),(2)),\quad SIG(\lambda)=\{2,2\}.
\end{aligned}
$$
\end{exam}

\section{Generating functions}

In this section we use generating functions to prove the Main Theorem~\ref{ourmainthm}. 
It accomplishes goal (1) of the Introduction by choosing $x=1.$.

\begin{defn} Let $GF_n(q)$ denote the generating function for all signed admissible neighborly 
partitions with exactly $n$ parts,
$$
GF_n(q)=\sum_{\lambda\in {\text{Admissible Neighborly with }}n {\text{ parts}}} sign(\lambda) q^{|\lambda|}.
$$
\end{defn}

We shall later prove the following recurrence.
\begin{prop}
\label{ourgnrecur}
The generating function $GF_n(q)$ satisfies the recurrence
$$
(1-q^n)GF_n(q)=-(q^{2n-2}+q^{3n-3})GF_{n-2}(q)
+(q^{2n-2}+q^{3n-4}+q^{3n-3})GF_{n-3}(q)
-q^{3n-4}GF_{n-4}(q).
$$
\end{prop}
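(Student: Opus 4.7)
The plan is a combinatorial decomposition of the left-hand side. First, since both neighborliness and the signature multiset are shift-invariant, the map $\lambda\mapsto\lambda-(1,\ldots,1)$ (subtracting $1$ from every part) is a sign-preserving bijection---with total weight decreasing by $n$---between admissible neighborly partitions having $n$ parts and minimum $\ge 2$ and those with minimum $\ge 1$. Hence $(1-q^n)GF_n(q)$ is the generating function for admissible neighborly $\lambda$ with $n$ parts and smallest part exactly $1$.

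Next I would decompose such $\lambda$ according to the connected component $C$ of $G_\lambda$ containing the vertex $1$. Such a $C$ has distinct values $\{1,\ldots,m\}$ for some $m\ge 1$ and doubled values $A=\{a_1<\cdots<a_s\}\subseteq\{1,\ldots,m\}$, and is admissible exactly when its signature multiset $\{a_1,a_2-a_1+2,\ldots,a_s-a_{s-1}+2,m-a_s+1\}$ avoids multiples of $3$. Removing $C$ and subtracting $m+1$ from every remaining part yields an admissible neighborly partition with $n-m-s$ parts, and the multiplicativity of signatures over components produces the identity
\[
(1-q^n)GF_n(q)=\sum_{(m,A)\ \text{adm.}}\mathrm{sign}(C_{m,A})\,q^{|C_{m,A}|+(m+1)(n-m-s)}\,GF_{n-m-s}(q).
\]
Direct evaluation of the six admissible components with $m+s\le 4$---namely $\{1,1\},\{1,2\},\{1,1,2\},\{1,2,2\},\{1,2,3\},\{1,2,2,3\}$---reproduces three of the six right-hand side terms ($-q^{2n-2}GF_{n-2}$, $-q^{3n-3}GF_{n-2}$, $+q^{3n-4}GF_{n-3}$), plus three extra summands $+q^{3n-5}GF_{n-3}+q^{4n-6}GF_{n-3}-q^{4n-8}GF_{n-4}$ that must be absorbed elsewhere.

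The principal obstacle is then to show that the sum of contributions from admissible initial components with $m+s\ge 5$, combined with these three unmatched small-case summands, produces the remaining three right-hand side terms $+q^{2n-2}GF_{n-3}+q^{3n-3}GF_{n-3}-q^{3n-4}GF_{n-4}$. A short algebraic manipulation shows this amounts to proving that the total contribution from the $m+s\ge 5$ components equals $(q^{2n-2}+q^{3n-3})H_{n-3}(q)-q^{3n-4}H_{n-4}(q)$, where $H_k(q):=(1-q^k)GF_k(q)$ is the generating function from Step 1 for admissible neighborly partitions with $k$ parts and minimum $1$. The appearance of $H_{n-3}$ and $H_{n-4}$ strongly suggests pairing each large initial component bijectively with an admissible neighborly partition of smaller size having minimum $1$; I would try to realise this as a sign-reversing involution on admissible pairs $(m,A)$ with $m+s\ge 5$, likely toggling a doubled value $a\in A$ determined by the mod-$3$ classes of entries in the signature multiset, whose fixed points are parameterised by the data enumerated by $H_{n-3}$ and $H_{n-4}$. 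Constructing this involution and verifying that it is well-defined, sign-reversing, and admissibility-preserving, with precisely the required fixed points, is the central technical step.
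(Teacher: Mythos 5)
There is a genuine gap: the central technical step of your argument --- the sign-reversing involution that is supposed to collapse the contributions of all initial components with $m+s\ge 5$, together with your three unmatched small-case summands, into $+q^{2n-2}GF_{n-3}+q^{3n-3}GF_{n-3}-q^{3n-4}GF_{n-4}$ --- is announced but not constructed. Your preliminary bookkeeping is correct (the shift argument for $(1-q^n)GF_n(q)$, the multiplicativity over components, and the six contributions for $m+s\le 4$ all check out), but decomposing by the first connected component of the \emph{original} graph $G_\lambda$ produces an infinite family of cases, and without the involution the proof does not close. As written, the proposal reduces the proposition to a harder-looking cancellation problem rather than proving it.

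The paper's device for exactly this difficulty is the modified graph $G'_\lambda$ of Section 5: a canonical rule deleting (roughly) every third edge in each chain cut out by the hanging edges. This rule is chosen so that (i) $sign(\lambda)=(-1)^{\#edges(G'_\lambda)}$ still holds, and (ii) every connected component of $G'_\lambda$ has at most $4$ vertices (Proposition~\ref{sixprop}). Decomposing by the first component of $G'_\lambda$ instead of $G_\lambda$ then terminates after exactly six cases, one per term of the recurrence. The point you miss by working with $G_\lambda$ is visible already in the component $1\leftrightarrow 1\leftrightarrow 2$: in $G'_\lambda$ the remaining vertices may have labels as low as $3$ (since the deleted edge $2\leftrightarrow 3$ may separate adjacent labels), which turns your extra term $q^{3n-5}GF_{n-3}$ into the correct $q^{2n-2}GF_{n-3}$ and simultaneously absorbs the whole $m+s\ge 5$ tail. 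If you want to salvage your outline, the edge-deletion rule of Section 5 is precisely the "involution" you are looking for, repackaged as a deterministic normal form rather than a pairing.
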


 Our main result is the generating function for admissible neighborly partitions 
 according to number of parts and the sum of the parts.

\begin{thm} 
\label{ourmainthm}
The generating function for all signed admissible neighborly partitions is
$$
GF(x,q)= \sum_{n=0}^\infty GF_n(q)x^n=1+\sum_{k=1}^\infty \frac{(-1)^k x^{2k}}{(q;q)_k} 
q^{(5k^2-k)/2} (xq;q)_{k-1}(1-xq^{2k}).
$$
\end{thm}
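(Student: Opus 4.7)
The plan is to deduce Theorem~\ref{ourmainthm} from Proposition~\ref{ourgnrecur} by converting the one-variable recurrence into a $q$-difference equation for $GF(x,q)$ and then showing that the proposed explicit series satisfies that equation. Multiplying the recurrence by $x^n$ and summing over $n$, each term $q^{an+b}GF_{n-j}(q)x^n$ reindexes to a constant multiple of $GF(xq^a,q)$, while the left-hand side becomes $GF(x,q)-GF(xq,q)$. After collecting common factors, one obtains
\[
GF(x,q)-GF(xq,q) = -x^2 q^2(1-xq^2)\,GF(xq^2,q) - x^2 q^3(1-xq^2)(1-xq^3)\,GF(xq^3,q). \qquad (\ast)
\]
Equation $(\ast)$ together with $GF(0,q)=1$ uniquely determines $GF(x,q)$ as a formal power series in $x$, since matching the coefficient of $x^n$ on both sides reproduces the original recurrence for $GF_n(q)$ and $1-q^n$ is invertible in $\Z[[q]]$ for $n\ge 1$.

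Let $F(x,q)$ denote the right-hand side of Theorem~\ref{ourmainthm}. It then suffices to verify $F(0,q)=1$ and that $F$ satisfies $(\ast)$. The initial value is immediate from the $k=0$ term, and as a sanity check the coefficient of $x^1$ in $F$ vanishes, matching $GF_1(q)=0$ (no one-part partition is neighborly).

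The main obstacle is verifying $(\ast)$ for $F$. Writing
\[
F(x,q)=1+\sum_{k\ge 1}c_k(x,q), \qquad c_k(x,q):=\frac{(-1)^k x^{2k} q^{(5k^2-k)/2}(xq;q)_{k-1}(1-xq^{2k})}{(q;q)_k},
\]
I would expand each $F(xq^j,q)$, $j=0,1,2,3$, termwise, using $(xq^{j+1};q)_{k-1}=(xq;q)_{k+j-1}/(xq;q)_j$ to normalize the shifted Pochhammer factors. After pulling out a common $q^{(5k^2-k)/2}(xq;q)_{k-1}$ and aligning powers of $x$, the claimed identity should reduce for each $k$ to a polynomial identity in $x$ and $q$; the explicit prefactors $(1-xq^2)$ and $(1-xq^3)$ on the right of $(\ast)$ suggest that contributions from adjacent indices on the left combine with the index-$k$ term on the right after a telescoping shift $k\leftrightarrow k+1$. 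A parallel approach I would try is to recognize $F(x,q)$ as a specialization within the Bailey--Andrews polynomial refinements of the Rogers--Ramanujan identities (the exponent $(5k^2-k)/2$ is the classical one), which would give $(\ast)$ for free from Bailey-lemma machinery.
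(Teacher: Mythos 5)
Your reduction of the theorem to the functional equation $(\ast)$ is correct: multiplying Proposition~\ref{ourgnrecur} by $x^n$ and summing does produce exactly
$GF(x,q)-GF(xq,q)=-x^2q^2(1-xq^2)GF(xq^2,q)-x^2q^3(1-xq^2)(1-xq^3)GF(xq^3,q)$,
and your uniqueness argument (the coefficient of $x^n$ reproduces the recurrence, $1-q^n$ is invertible, $GF(0,q)=1$) is sound. This is the same architecture as the paper's proof, just run at the level of generating functions rather than coefficients: the paper starts from the classical Ramanujan functional equation
$H(x)/(xq;q)_\infty-H(xq)/(xq^2;q)_\infty=qx\,H(xq^2)/(xq^3;q)_\infty$
for the right-hand series $H$, extracts the three-term coefficient recurrence \eqref{RRrecur}, and iterates once on the $H_{n-1}$ term to land on Proposition~\ref{ourgnrecur}; your $(\ast)$ is precisely what that same one-step iteration gives at the series level.

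The genuine gap is that you never actually verify that the explicit series $F(x,q)$ satisfies $(\ast)$ --- and that verification is the entire analytic content of the theorem. Your two proposed routes (a termwise expansion that ``should reduce to a polynomial identity'' via a $k\leftrightarrow k+1$ telescope, or an appeal to Bailey-lemma machinery) are both left unexecuted, and the first is not a routine computation: it is essentially Ramanujan's 1919 proof of his functional equation, which requires a genuinely clever rearrangement of the series. The clean way to close the gap is to do what the paper does: cite that $H(x)$ satisfies the displayed functional equation \cite{R1919}, deduce $H(x)-(1-xq)H(xq)=qx(1-xq)(1-xq^2)H(xq^2)$, substitute $x\mapsto xq$ into this relation and eliminate $(1-xq)H(xq)$ to obtain exactly your $(\ast)$ for $H=F$. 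With that citation and one line of algebra your argument becomes complete and coincides with the paper's; without it, the proof is a correct framework around an unproven identity.
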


\begin{proof}  Let $H(x)$ be the right side in Theorem~\ref{ourmainthm}. Then $H(x)$ 
has a well-known functional equation \cite{R1919},
$$
\frac{H(x)}{(xq;q)_\infty}-\frac{H(xq)}{(xq^2;q)_\infty}=qx \frac{H(xq^2)}{(xq^3;q)_\infty}.
$$
This implies that $H_n$, the coefficient of $x^n$ in $H(x),$ satisfies
\begin{equation}
\label{RRrecur}
(1-q^n)H_n=-q^n(1-q^{n-1})H_{n-1}-(q^{2n-2}+q^{2n-1})H_{n-2}+q^{2n-2}H_{n-3}.
\end{equation}
Iterating \eqref{RRrecur} on $(1-q^{n-1})H_{n-1}$ gives the same recurrence as in Proposition~\ref{ourgnrecur},
so $H_n=GF_n(q).$
\end{proof}

\section{Another realization of $sign(\lambda)$}

In order to prove Proposition~\ref{ourgnrecur}, we need to simplify the graphs $G_\lambda$, 
keeping the same vertices and labels, but defining a new sign. This will be done by 
deleting edges in the chains cut out by the hanging edges to obtain a new graph $G'_{\lambda},$ 
so that $sign(\lambda)$ is now just $(-1)^{\#edges(G'_{\lambda})}.$ 

Via Theorem~\ref{mainsig}, an admissible neighborly partition $\lambda$ 
has $SIG(G_\lambda)$ with no elements that are multiples of 3. The elements of 
$SIG(G_\lambda)$ are the lengths of chains cut out by the hanging edges. We will 
delete edges in $G_\lambda$ on each subchain by the following rule, always preserving the hanging edges.

If a chain has $n$ edges $\{e_1,e_2,\cdots, e_n\}$ from left to right,

\begin{enumerate}
\item {\text{delete edges }}$ \{e_3,e_6,\cdots, e_{n-2}\}$ {\text{ if $n\equiv 2\mod 3$}}\\
\item {\text{delete edges }}$ \{e_3,e_6,\cdots, e_{3m}\}\cup \{e_{3m+2},\cdots, e_{6m-1}\}$ {\text{ if $n=6m+1$,}}\\
\item {\text{delete edges }}$\{e_3,e_6,\cdots, e_{3m}\}\cup \{e_{3m+2},\cdots, e_{6m+2}\}$ {\text{ if $n=6m+4$,}}\
\end{enumerate}

\begin{defn}
Let $G'_\lambda$ denote the graph $G_\lambda$ with these edges deleted. 
\end{defn} 

\begin{exam} If $\lambda=((1,2,3,4,5,6,7),(1,3,6)),$ 
$$
G_\lambda=
\begin{matrix}
\ \ \ \ 1\leftrightarrow&2\leftrightarrow&3&\leftrightarrow &4&\leftrightarrow&5\leftrightarrow&\ \ \ 6\leftrightarrow&7\\
\updownarrow&&\updownarrow&&&&&\updownarrow&\\
1&&3&&&&&6&
\end{matrix}
$$
In the chain 
$1\leftrightarrow 1\leftrightarrow 2 \leftrightarrow 3 \leftrightarrow3$ we delete the third edge 
$2\leftrightarrow3$ to obtain $1\leftrightarrow 1\leftrightarrow 2\quad 3 \leftrightarrow3.$
In the chain 
$3\leftrightarrow 3\leftrightarrow 4 \leftrightarrow 5 \leftrightarrow6 \leftrightarrow6$ we delete the third edge 
$4\leftrightarrow 5$ to obtain $3\leftrightarrow 3\leftrightarrow 4 \quad 5 \leftrightarrow6\leftrightarrow6 ,$ so
$$
G_\lambda'=
\begin{matrix}
\ \ \ \ 1\leftrightarrow&2\quad &3&\leftrightarrow &4\quad&5\leftrightarrow&\ \ \ 6\leftrightarrow&7\\
\updownarrow&&\updownarrow&&&&\updownarrow&\\
1&&3&&&&6&
\end{matrix}
$$

\end{exam}
Note that the third edge is deleted, along with every next third edge, except for the middle, and 
the initial and final edges are preserved. Thus all hanging edges and vertex labels are preserved.
We need to see how the sign can be preserved.

\begin{prop} For any admissible neighborly partition $\lambda$
$$
sign(\lambda)= (-1)^{\#edges(G'_{\lambda})}.
$$
\end{prop}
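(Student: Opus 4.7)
The plan is to reduce to a local count on each chain (in the sense of Proposition~\ref{newprop}) and then reassemble, exploiting the fact that the deletion rules act chain by chain while $G_\lambda$ couples consecutive chains at their hanging edges. First I would verify the local count: on a chain of length $n$ with $n\not\equiv 0\bmod 3$, direct enumeration from the three deletion rules gives $2k+2$ kept edges when $n=3k+2$, $4m+1$ when $n=6m+1$, and $4m+3$ when $n=6m+4$. In every case the parity equals $[n\equiv 1\bmod 3]$, so $(-1)^{\text{kept edges on the chain}}=B_n$. Moreover, the smallest deleted index is $3$ and the largest is at most $n-2$, so the first and last edges of every chain (where the hanging edges sit) are never touched.

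Next I would globalize. Let $R$ denote the sum, over all chains, of the kept edges in each chain, and let $T=\#\mathrm{edges}(G'_\lambda)$. Each surviving backbone edge sits in exactly one chain, whereas each hanging edge $a_i\leftrightarrow a_i$ sits in exactly two consecutive chains of its connected component (the chain ending at $a_i$ and the one starting at $a_i$). Therefore $R=T+s$, where $s$ is the number of hanging edges, equivalently the number of parts of $\mu_2$. By the local count, $R\equiv t\pmod{2}$, where $t$ is the number of elements of $SIG(G_\lambda)$ congruent to $1$ modulo $3$. Hence $T\equiv t+s\pmod{2}$, and Theorem~\ref{mainsig} yields $(-1)^T=(-1)^{t+s}=sign(\lambda)$.

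The main obstacle is the bookkeeping that distinguishes shared from unshared edges in the chain decomposition. Once the hanging edges are recognized as exactly the edges double-counted by $R$, the identity reduces to the parity equation above. Boundary situations fit the same accounting: a connected component with $s=0$ contributes a single chain and no hanging edges, so $R=T$ there; a chain of length $1$ (taking $m=0$ in the $6m+1$ rule) has both deleted sets empty, so its unique edge survives and contributes parity $1$, in agreement with $B_1=-1$.
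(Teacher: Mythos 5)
Your proof is correct and follows essentially the same route as the paper's: a chain-by-chain parity count of the surviving edges ($2m+2$, $4m+1$, $4m+3$ in the three cases), combined with the observation that each hanging edge is counted in the two chains it bounds, which is absorbed by the factor $(-1)^s$ from Theorem~\ref{mainsig}. Your explicit identity $R=T+s$ simply makes precise the paper's closing remark that the hanging edges are double counted.
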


\begin{proof}
Let's first check that any chain in $G_\lambda$ with $3m+1$ edges has an odd number of edges in $G_\lambda'$, 
while chains in $G_\lambda$ with $3m+2$ edges have an even number of edges in $G_\lambda'.$

In the second case,  if $n=3m+2$, the sign is $+1$ and the number of edges is $n-(n-2)/3=2m+2$ which is even.
For the first case, if $n=6m+1$, the sign is $-1$ and the number of edges is $n-2m=4m+1$ which is odd.
For the first case,  if $n=6m+4$, the sign is $-1$ and the number of edges is $n-2m-1=4m+3$ which is odd. 
Finally, $sign(\lambda)$ in Theorem~\ref{mainsig} also includes a factor of $(-1)^s$, where $s$ is the number of hanging edges. 
Each hanging edge occurs in 2 chains, so this factor compensates for double counting these edges.
\end{proof}

Since we are deleting every third edge from $G_\lambda$ to obtain $G'_{\lambda}$ , 
the connected components of $G'_{\lambda}$ are small and limited.

\begin{prop} 
\label{sixprop}
For any admissible neighborly partition $\lambda$, the connected components of $G'_\lambda$ are one of six types
$$
\begin{aligned}
a\leftrightarrow a, &\quad a\leftrightarrow a+1, \quad a\leftrightarrow a\leftrightarrow a+1, \quad a\leftrightarrow a+1\leftrightarrow a+1,\\ 
&a \leftrightarrow a+1\leftrightarrow a+2, \quad a \leftrightarrow a+1 \leftrightarrow a+1\leftrightarrow a+2.
\end{aligned}
$$
\end{prop}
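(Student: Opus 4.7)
The plan is to analyze $G'_\lambda$ subchain-by-subchain and then describe how the resulting pieces glue together at the preserved hanging edges. By a \emph{subchain} I mean a maximal linear segment of $G_\lambda$ delimited by (and including at each end) a hanging edge, exactly as in the proof of Proposition~\ref{newprop}; admissibility forces every subchain to have a number of edges $n \not\equiv 0 \pmod 3$.

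First I would verify the key local fact: within any subchain of length $n$, the deletion rule leaves only pieces of at most two edges, and both endpoint edges $e_1$ and $e_n$ survive. For $n \equiv 2 \pmod 3$ this is immediate, since exactly the edges at positions divisible by three are removed and any three consecutive positions contain one such. For $n = 6m+1$ and $n = 6m+4$ the mod-$3$ argument applies separately on the two halves of the subchain, while the middle edge $e_{3m+1}$ lies between two deletions (both $e_{3m}$ and $e_{3m+2}$ are removed) and is therefore isolated. Hence every piece has at most two edges.

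Next I would classify the pieces. Interior pieces of a subchain (those touching neither $e_1$ nor $e_n$) consist only of backbone edges and thus have one of the forms $a \leftrightarrow a+1$ or $a \leftrightarrow a+1 \leftrightarrow a+2$. An end piece contains $e_1$ or $e_n$; this edge is a hanging edge precisely when the corresponding end of the subchain is an interior hanging edge of the ambient $G_\lambda$-component, and is otherwise a backbone edge lying at an extreme of a run (in which case the piece is again a pure backbone segment of one of the same two forms).

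Finally I would handle the gluing. A preserved hanging edge $a \leftrightarrow a$ sits simultaneously at the right end of one subchain and the left end of the next, so in $G'_\lambda$ it identifies the last piece of the former with the first piece of the latter; these two pieces then form a single component. Each contributes at most one extra backbone edge attached to the hanging edge, namely $(a-1) \leftrightarrow a$ on the left or $a \leftrightarrow a+1$ on the right, so the glued component is just the hanging edge decorated by $0$ or $1$ backbone edge on each side. This yields exactly the four types $a \leftrightarrow a$, $a \leftrightarrow a \leftrightarrow a+1$, $a \leftrightarrow a+1 \leftrightarrow a+1$, and $a \leftrightarrow a+1 \leftrightarrow a+1 \leftrightarrow a+2$, which together with the two pure-backbone types above give the six cases claimed. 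The main obstacle is the finite case check in the first step, most delicate at the transition edge $e_{3m+1}$ in the $n = 6m+1$ and $n = 6m+4$ subchains and in the degenerate small-$n$ subcases; once that is settled the gluing analysis is essentially mechanical.
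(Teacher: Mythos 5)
Your argument is correct. Note that the paper gives no proof of Proposition~\ref{sixprop} at all—only the one-sentence remark that deleting every third edge leaves components that are ``small and limited''—so your write-up supplies the missing details in the natural (and essentially only) way: verify that the deletion rule cuts each subchain into pieces of at most two edges while preserving both end edges, then observe that each surviving hanging edge is shared by two consecutive subchains and therefore acquires at most one backbone edge on each side. The delicate points you flag all check out: the isolated middle edge $e_{3m+1}$ in the $n=6m+1$ and $n=6m+4$ cases, the degenerate subchains with $n\in\{1,2,4\}$, and the pure-backbone components with no hanging edge; the six listed types are exactly the backbone pieces of one or two edges together with a hanging edge decorated by zero or one backbone edge on each side.
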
 

Finally we use these six possible connected components to prove Proposition~\ref{ourgnrecur}.
\begin{proof}
[Proof of Proposition~\ref{ourgnrecur}]
Since $q^nGF_n(q)$ is the signed generating function with $n$ parts and no $1$, 
 $(1-q^n)GF_n(q)$ is the generating function for signed admissible neighborly partitions with $n$ parts that include a 
 part of size 1. The first connected component in any $G'_\lambda$ must contain a $1$ and be one of the six 
 graphs in Proposition~\ref{sixprop}.
 
\begin{enumerate}
\item If the first component is $1\leftrightarrow 1$, the remaining $n-2$ vertices have labels at least 3, and the 
signed generating function is $-q^2q^{2(n-2)} GF_{n-2}(q).$\\
\item If the first component is $1\leftrightarrow 2$, the remaining $n-2$ vertices have labels at least 4, and the 
signed generating function is $-q^3q^{3(n-2)} GF_{n-2}(q).$\\
\item If the first component is $1\leftrightarrow 1\leftrightarrow 2$, the remaining $n-3$ vertices have labels at least 3, and the 
signed generating function is $q^4q^{2(n-3)} GF_{n-3}(q).$ This is because deleting $1\leftrightarrow 1\leftrightarrow 2$ removes 3 vertices, 
possibly from the first chain, so its mod 3 value is unchanged, and remains admissible\\
\item If the first component is $1\leftrightarrow 2\leftrightarrow 2$, the remaining $n-3$ vertices have labels at least 4, and the 
signed generating function is $q^5q^{3(n-3)} GF_{n-3}(q).$\\
\item If the first component is $1\leftrightarrow 2\leftrightarrow 3$, the remaining $n-3$ vertices have labels at least 4, and the 
signed generating function is $q^6q^{3(n-3)} GF_{n-3}(q).$ As before we are deleting 3 vertices, so admissibility is preserved.\\
\item If the first component is $1\leftrightarrow2\leftrightarrow 2\leftrightarrow 3$, the remaining $n-4$ vertices have labels at least 4, and the 
signed generating function is $-q^8q^{3(n-4)} GF_{n-4}(q).$\\
\end{enumerate}

These are the six terms in Proposition~\ref{ourgnrecur}.
\end{proof}

\section{Remarks}
A topological explanation of Proposition~\ref{chainsign} via an Euler characteristic is given in \cite[Cor. 6.3]{EH2006}.

The second Rogers-Ramanujan identity has a similar interpretation.
\begin{prop} 
\label{MMRR23}
The signed generating function for all admissible neighborly partitions $\gamma$ without a part of size 1 is
$$
\begin{aligned}
\sum_{\gamma} sign(\gamma) q^{|\gamma|}= &\prod_{k=0}^\infty (1-q^{5k+4})(1-q^{5k+5})(1-q^{5k+6})\\
=& \sum_{k=0}^\infty (-1)^k q^{k(5k+3)/2}(1+q+\cdots+ q^{2k}).
\end{aligned}
$$
\end{prop}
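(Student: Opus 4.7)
The plan is to deduce Proposition~\ref{MMRR23} from Theorem~\ref{ourmainthm} by a translation bijection followed by the specialization $x=q$. First I would observe that the graph $G_\lambda$, the signature multiset $SIG(G_\lambda)$, admissibility, and $sign(\lambda)$ all depend only on the equalities and differences among the parts of $\lambda$. Consequently, the map $\lambda \mapsto \lambda+(1^n)$ is a sign-preserving bijection between admissible neighborly partitions with exactly $n$ parts and admissible neighborly partitions with $n$ parts all of size at least $2$. Since this map multiplies $q^{|\lambda|}$ by $q^n$, summing over $n$ gives
$$\sum_{\gamma} sign(\gamma) q^{|\gamma|} = \sum_{n=0}^\infty q^n GF_n(q) = GF(q,q).$$

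Next I would substitute $x=q$ in Theorem~\ref{ourmainthm}. Using $(q^2;q)_{k-1}=(q;q)_k/(1-q)$ and the identity $2k+(5k^2-k)/2 = k(5k+3)/2$, the $k$-th summand collapses to $(-1)^k q^{k(5k+3)/2}(1-q^{2k+1})/(1-q)$. Absorbing the leading $1$ as the $k=0$ contribution (since $q^0(1-q)=1-q$) and then factoring $1-q^{2k+1}=(1-q)(1+q+\cdots+q^{2k})$ inside the sum gives
$$GF(q,q) = \sum_{k=0}^\infty (-1)^k q^{k(5k+3)/2}(1+q+\cdots+q^{2k}),$$
which is precisely the second expression in the proposition.

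Finally, to obtain the product form I would telescope back to a bilateral series: the substitution $k \mapsto -k-1$ in $\sum_{k\in\Z}(-1)^k q^{k(5k+3)/2}$ yields exactly the $-q^{2k+1}$ half of the previous display, so $\sum_{k\ge 0}(-1)^k q^{k(5k+3)/2}(1-q^{2k+1}) = (q,q^4,q^5;q^5)_\infty$ by Jacobi's triple product, and dividing by $(1-q)$ rewrites this as $\prod_{k\ge 0}(1-q^{5k+4})(1-q^{5k+5})(1-q^{5k+6})$. The only step with any real content is the translation bijection; once one checks that $SIG$ depends solely on intra-component differences (which is visible from its definition), the remaining work is routine specialization of Theorem~\ref{ourmainthm} and a familiar Jacobi reindexing.
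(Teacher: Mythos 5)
Your proposal is correct, and it follows what is evidently the intended route: the paper states Proposition~\ref{MMRR23} in its Remarks section without proof, as the $x=q$ companion to the $x=1$ specialization used for Proposition~\ref{MMRR14}. The three ingredients all check out: the shift $\lambda\mapsto\lambda+(1^n)$ preserves neighborliness, admissibility, and $sign$ because $G_\lambda$ and $SIG(G_\lambda)$ depend only on multiplicities and differences of parts, so the desired series is $GF(q,q)$; setting $x=q$ in Theorem~\ref{ourmainthm} and using $(q^2;q)_{k-1}=(q;q)_k/(1-q)$ together with $2k+(5k^2-k)/2=k(5k+3)/2$ gives the stated single sum; and the reindexing $k\mapsto -k-1$ plus Jacobi's triple product (with base $q^5$ and $z=q^4$) yields $(q,q^4,q^5;q^5)_\infty/(1-q)=\prod_{k\ge 0}(1-q^{5k+4})(1-q^{5k+5})(1-q^{5k+6})$.
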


One may use a version of Proposition~\ref{ourgnrecur} which counts edges in $G'_{\lambda}$ 
to prove the next proposition.

\begin{prop} 
\label{edgevertex}
The generating function for signed admissible partitions $\lambda$ such that
$G'_{\lambda}$
\begin{enumerate}
\item has $2n$ vertices and $n+j$ edges is
$$
(-1)^{n+j}\frac{(-q;q^2)_{n-j} (q^{2n-2j-1};q^{-2})_j}{(q^2;q^2)_{2j} (q^2;q^2)_{n-2j}} q^{2(n - j)^2 + 4j^2 + 2 j},
$$
\item has $2n+1$ vertices and $n+j+1$ edges is
$$
(-1)^{n+j}\frac{(-q;q^2)_{n-j} (q^{2n-2j-1};q^{-2})_j}{(q^2;q^2)_{2j+1} (q^2;q^2)_{n-2j-1}} q^{2(n - j)^2 + 4j^2 + 6j+2}.
$$
\end{enumerate}
\end{prop}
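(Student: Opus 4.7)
The plan is to refine the signed generating function $GF_n(q)$ by an auxiliary variable $z$ tracking the number of edges of $G'_\lambda$. Define
\[
\widetilde{GF}_n(q,z) = \sum_{\lambda} z^{\#E(G'_\lambda)}\, q^{|\lambda|},
\]
where the sum is over all admissible neighborly partitions with exactly $n$ parts. Since $sign(\lambda) = (-1)^{\#E(G'_\lambda)}$, we have $\widetilde{GF}_n(q,-1) = GF_n(q)$. Rerunning the six-case argument used to prove Proposition~\ref{ourgnrecur}, which classifies the first connected component of $G'_\lambda$ (the one containing the part $1$) by the six graphs of Proposition~\ref{sixprop} with $1,1,2,2,2,3$ edges respectively, yields the refined recurrence
\[
(1-q^n)\widetilde{GF}_n(q,z) = z(q^{2n-2}+q^{3n-3})\widetilde{GF}_{n-2}(q,z) + z^2(q^{2n-2}+q^{3n-4}+q^{3n-3})\widetilde{GF}_{n-3}(q,z) + z^3 q^{3n-4}\widetilde{GF}_{n-4}(q,z),
\]
which specializes at $z=-1$ to Proposition~\ref{ourgnrecur}.

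Let $A_{n,j}(q)$ denote the coefficient of $z^{n+j}$ in $\widetilde{GF}_{2n}(q,z)$ and $C_{n,j}(q)$ the coefficient of $z^{n+j+1}$ in $\widetilde{GF}_{2n+1}(q,z)$; then $(-1)^{n+j}A_{n,j}$ and $(-1)^{n+j}C_{n,j}$ are precisely the signed generating functions claimed in parts (1) and (2). Substituting $n\mapsto 2n$ (resp.\ $n\mapsto 2n+1$) in the refined recurrence and extracting the coefficient of $z^{n+j}$ (resp.\ $z^{n+j+1}$) produces the coupled system
\[
(1-q^{2n})A_{n,j} = (q^{4n-2}+q^{6n-3})A_{n-1,j} + (q^{4n-2}+q^{6n-4}+q^{6n-3})C_{n-2,j-1} + q^{6n-4}A_{n-2,j-1},
\]
\[
(1-q^{2n+1})C_{n,j} = (q^{4n}+q^{6n})C_{n-1,j} + (q^{4n}+q^{6n-1}+q^{6n})A_{n-1,j} + q^{6n-1}C_{n-2,j-1},
\]
with initial conditions $A_{0,0}=1$, $A_{n,j}=0$ for $n<2j$, $C_{n,j}=0$ for $n<2j+1$, and all other negative-index entries zero. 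Small-case checks (for instance $A_{1,0}=(q^2+q^3)/(1-q^2)$, $A_{2,1}=q^8/(1-q^4)$, and $C_{1,0}=(q^4+q^5+q^6)/(1-q^3)$, each directly enumerable from the six types of Proposition~\ref{sixprop}) confirm that the claimed closed forms match the initial data, as does the parity constraint that the number of 3-vertex components is even in part (1) and odd in part (2).

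The remaining task is a simultaneous induction on $n$: substitute the closed forms of parts (1) and (2) at smaller indices into the right-hand sides of the coupled recurrences, and verify equality with the left-hand sides. The $q$-Pochhammer manipulations required are the elementary splits
\[
(-q;q^2)_{n-j} = (1+q^{2n-2j-1})(-q;q^2)_{n-j-1}, \qquad (q^2;q^2)_{n-2j} = (1-q^{2n-4j})(q^2;q^2)_{n-2j-1},
\]
together with the reverse-Pochhammer identity $(q^{2n-2j-1};q^{-2})_j = q^{2n-2j-1}(q^{2n-2j-3};q^{-2})_{j-1}$ and analogous shifts for $(q^2;q^2)_{2j}$. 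The main obstacle is the bookkeeping: three Pochhammer-product terms appear on each side of each coupled recurrence, shifted differently in $(n,j)$, and combining them into a common form requires careful factoring. A cleaner alternative route would be to guess a bivariate closed form for $\sum_{n\ge 0}\widetilde{GF}_n(q,z)\,x^n$, generalizing Theorem~\ref{ourmainthm} to include the parameter $z$, and then read off parts (1) and (2) as coefficients of $x^{2n}z^{n+j}$ and $x^{2n+1}z^{n+j+1}$; this would replace the coupled induction by a single functional-equation verification analogous to the one in the proof of Theorem~\ref{ourmainthm}.
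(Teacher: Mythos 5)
Your approach is exactly the one the paper indicates for Proposition~\ref{edgevertex}: the paper's entire ``proof'' is the one-sentence remark that one may use a version of Proposition~\ref{ourgnrecur} which counts edges in $G'_\lambda$, and your refined recurrence in $z$ is the correct such version (the powers of $z$ match the edge counts $1,1,2,2,2,3$ of the six components in Proposition~\ref{sixprop}, and it specializes to Proposition~\ref{ourgnrecur} at $z=-1$). The extraction of the coupled system for $A_{n,j}$ and $C_{n,j}$ is also correct, and your base-case checks are right: for instance $A_{1,0}=(q^2+q^3)/(1-q^2)=q^2/(1-q)$ and $C_{1,0}=(q^4+q^5+q^6)/(1-q^3)=q^4/(1-q)$ agree with the claimed products. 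One point worth making explicit, since the refinement leans on it more heavily than the sign computation does: when the first component of $G'_\lambda$ is removed and the labels are shifted down, the edge-deletion pattern on the remaining chains must coincide with the deletion pattern computed from scratch for the smaller partition (not merely preserve admissibility), so that $\#E(G'_\lambda)$ decomposes additively across the recurrence; this follows from the self-similarity of the deletion rule under removal of the first three edges of a chain, which is implicitly used in cases (3) and (5) of the proof of Proposition~\ref{ourgnrecur}, but it deserves a sentence.

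The one genuine shortfall is that you stop before the substantive step. You state that the closed forms should be substituted into the coupled recurrences and verified by simultaneous induction, but you do not carry out that verification, and it is the entire mathematical content of the proposition --- everything preceding it is bookkeeping the paper has already supplied. Since $(1-q^{2n})$ and $(1-q^{2n+1})$ are nonzero, the coupled system together with $A_{0,0}=1$ and the stated vanishing conditions determines $A_{n,j}$ and $C_{n,j}$ uniquely, so your plan succeeds or fails entirely on this one Pochhammer computation, and until it is done the argument is a correct and well-organized outline rather than a proof. (In fairness, the paper omits this verification as well.)
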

We do not know a proof of Theorem~\ref{ourmainthm} using Proposition~\ref{edgevertex}.

It is classically known \cite{R1919} that
\begin{equation}
GF(x,q)=(xq;q)_\infty \sum_{k=0}^\infty \frac{q^{k^2}}{(q;q)_k} x^k= 
1+\sum_{k=1}^\infty \frac{(-1)^k x^{2k}}{(q;q)_k} 
q^{(5k^2-k)/2} (xq;q)_{k-1}(1-xq^{2k})
\end{equation}
also satisfies Proposition~\ref{ourgnrecur}.

\end{document}